\DeclareMathAlphabet{\eusm}{OT1}{eusm}{m}{n}
\DeclareMathAlphabet{\Bbb}{OT1}{msbm}{b}{n} \textheight 22cm
\title { Symmetric Derivations on K\"{a}hler Modules}
\author{ Necati Olgun
\thanks{Department of Mathematics, University of Gaziantep, Gaziantep, Turkey olgun@gantep.edu.tr}}
\date{}
\begin{document}
\maketitle

\newtheorem{thm}{Theorem}[section]
\newtheorem{lem}[thm]{Lemma}
\newtheorem{prop}[thm]{Proposition}
\newtheorem{cor}[thm]{Corollary}
\newtheorem{df}[thm]{Definition}
\newtheorem{nota}{Notation}
\newtheorem{note}[thm]{Remark}
\newtheorem{ex}[thm]{Example}
\newenvironment{proof}{\par\noindent{\bf Proof \,}}{$\hfill \Box$\par\bigskip}

\begin{abstract}
 In this paper, we define generalized symmetric derivations on K\"{a}hler modules. We give the relationships between the projective dimensions of $\Omega^{(1)}(R/k)$ and $\Omega^{(2)}(R/k)$ by using the symmetric derivation.We then give some interesting results by using this definition and related to K\"{a}hler modules and symmetric derivations.\\

{\bf  AMS Subject Classification: } Primary 13N05,13C10,13H05.      \\
{\bf Key words:} K\"{a}hler module, symmetric derivation, regular
ring.
\end{abstract}

\section{Introduction}\label{int}
\quad \quad

The concept of a K\"{a}hler module of $q$ th order was introduced by H.Osborn in 1965 [10]. Same notion has appeared in
R.G. Heyneman and M.E. Sweedler [12]. They introduced differential operators on a commutative algebra which extend the notion of derivations. J. Johnson [3] introduced differential module structures on certain modules of K\"{a}hler differentials.
Y. Nakai [7] developed the fundamental theories for the calculus of high order derivations and some functorial properties of the module of high order differentials in his paper. M.E. Sweedler [13] introduced right differential operators on a noncommutative algebra, which extend the notion of right derivations. Komatsu [4] introduced right differential operators on a noncommutative ring extension. In [8] author characterized the homological dimension of n-th order K\"{a}hler differentials of $R$ over $k$, and examined functorial properties of the module of n-th order K\"{a}hler differentials of $R\bigotimes S$ over $k$ in [9]. Then many authors studied on the properties of K\"{a}hler modules [2,5,14,15].

The purpose of this paper is introduced generalized (high order) symmetric derivation on high order K\"{a}hler modules which have not been considered before.
We will construct the generalized symmetric derivation on high order K\"{a}hler modules of commutative ring extension $R/k$
and show their fundamental properties. We give the relationships between the projective dimensions of $\Omega^{(1)}(R/k)$ and $\Omega^{(2)}(R/k)$ by using the symmetric derivation.  In particular our exact sequence of high order K\"{a}hler modules were not known related to regularity of the commutative rings.

Throughout this paper we will let $R$ be a commutative algebra over an algebraically closed field $k$ with characteristic
zero. When $R$ is a $k$-algebra,$J_{n}(R/k)$ or $J_{n}(R)$ denotes the universal module of n-th order differentials
of $R$ over $k$ and $\Omega^{(q)}(R/k)$ denotes the module
of q-th order K\"{a}hler differentials of $R$ over $k$ and
$\delta^{(q)}_{R/k}$ or $\delta^{(q)}$ denotes the canonical q-th
order $k$-derivation $R\rightarrow \Omega^{(q)}(R/k)$ of $R$. The
pair \{$\Omega^{(q)}(R/k),\delta^{(q)}_{R/k}$\} has the universal
mapping property with respect to the q-th order $k$-derivations of
$R$. $I_{R/k}$ or $I_{R}$ denotes the kernel of the canonical
mapping $R\otimes_{k}R\rightarrow R$ $(a\otimes b \rightarrow
ab)$. $\Omega^{(q)}(R/k)$ is identified with $I_{R}/I^{q+1}_{R}$.

It is well known that $J_{n}(R)\cong\Omega^{(n)}(R)\oplus R$.

$\Omega^{(q)}(R/k)$ is generated by the set $\{\delta^{(q)}(r): \
\ r \in R\}$. Hence if $R$ is finitely generated $k$-algebra,
$\Omega^{(q)}(R/k)$ will be a finitely generated $R$-module.\\

\section{Preliminaries}
In this section, we give some basic definitions and results about the symmetric derivation and K\"{a}hler modules of differentials.

\begin{df}[Osborn, 10]Let $R$ be any $k$-algebra (commutative with unit),
$R\rightarrow \Omega^{(q)}(R/k)$ be  $q$-th order K\"{a}hler
derivation of $R$ and let $S(\Omega^{(1)}(R/k))$ be the symmetric
algebra $\bigoplus_{p\geq 0}S^{p}(\Omega^{(1)}(R/k))$ generated over
$R$ by $\Omega^{(1)}(R/k)$.

A symmetric derivation is any linear map $D$ of
$S(\Omega^{(1)}(R/k))$ into itself such that

i) $D(S^{p}(\Omega^{(1)}(R/k)))\subset S^{p+1}(\Omega^{(1)}(R/k))$

ii) $D$ is a first order derivation over $k$ and

iii) the restriction of $D$ to $R$ ($R\simeq
S^{0}(\Omega^{(1)}(R/k))$) is the K\"{a}hler derivation $\delta^{(1)}:
R\rightarrow \Omega^{(1)}(R/k)$
\end{df}

\begin{thm}[Osborn, 10]Let $R$ be an affine $k$-algebra. Then there exists a
short exact sequence of $R$ modules
  $$0\rightarrow Ker\theta \rightarrow
\Omega^{(2)}(R/k)\overset{\theta}{\rightarrow}
\Omega^{(1)}(R/k)\rightarrow 0$$ such that
$\theta(\delta^{(2)}(f))=\delta^{(1)}(f) $ for all $f \in R$ and
$Ker\theta$ is generated by the set $\{
\delta^{(2)}(ab)-a\delta^{(2)}(b)-b\delta^{(2)}(a) \}$ for all
$a,b\in R$.
\end{thm}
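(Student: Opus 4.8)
The plan is to realize $\theta$ as the projection induced by the filtration of $I_R$ by its powers, and then to pin down $Ker\,\theta$ by a direct computation in $R\otimes_k R$. Recall from the conventions above that $\Omega^{(1)}(R/k)=I_R/I_R^2$ and $\Omega^{(2)}(R/k)=I_R/I_R^3$, and that both canonical derivations send $f\in R$ to the class of $d(f):=1\otimes f-f\otimes 1\in I_R$ in the appropriate quotient, where $R$ acts through the left tensor factor. Since $I_R^3\subseteq I_R^2$, the identity map of $I_R$ descends to a well-defined surjective $R$-module homomorphism $\theta\colon I_R/I_R^3\to I_R/I_R^2$, and by construction $\theta(\delta^{(2)}(f))=\delta^{(1)}(f)$ for every $f$. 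Its kernel is exactly $I_R^2/I_R^3$, so the sequence
$$0\to I_R^2/I_R^3\to \Omega^{(2)}(R/k)\overset{\theta}{\longrightarrow}\Omega^{(1)}(R/k)\to 0$$
is short exact; it remains only to exhibit the generators of $Ker\,\theta=I_R^2/I_R^3$.

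Next I would record the key identity. Writing the $R$-action as $a\cdot d(f)=(a\otimes 1)\,d(f)$ and expanding in the ring $R\otimes_k R$, a routine computation gives
$$\delta^{(2)}(ab)-a\,\delta^{(2)}(b)-b\,\delta^{(2)}(a)=\overline{d(a)\,d(b)},$$
where the bar denotes the class in $I_R/I_R^3$ and $d(a)\,d(b)\in I_R^2$. Thus each of the proposed generators indeed lies in $Ker\,\theta$. For the converse inclusion I would use that the ideal $I_R$ is generated as a left $R$-module by the set $\{d(f):f\in R\}$: any element $\sum_i x_i\otimes y_i\in I_R$ satisfies $\sum_i x_iy_i=0$ and hence equals $\sum_i x_i\cdot d(y_i)$. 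Because $I_R^2$ is spanned by products of pairs of elements of $I_R$, and $a\cdot d(f)=(a\otimes 1)\,d(f)$, it follows that $I_R^2$ is generated as a left $R$-module by the products $\{d(a)\,d(b):a,b\in R\}$. Reducing modulo $I_R^3$ then shows that $Ker\,\theta=I_R^2/I_R^3$ is generated as an $R$-module precisely by the classes $\overline{d(a)\,d(b)}$, that is, by the elements $\delta^{(2)}(ab)-a\,\delta^{(2)}(b)-b\,\delta^{(2)}(a)$. Affineness of $R$ ensures these may be taken over a finite generating set of $R$, so all modules in sight are finitely generated.

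The step I expect to be the main obstacle is the middle one: correctly juggling the two distinct structures on $R\otimes_k R$, namely its ring multiplication and the left $R$-module action, so that the Leibniz-type identity comes out exactly as $\overline{d(a)\,d(b)}$, and then arguing that these products generate $I_R^2$ as a module and not merely as an ideal. Everything else is formal once the identification $\Omega^{(q)}(R/k)\cong I_R/I_R^{q+1}$ and the description of $\delta^{(q)}$ as the class of $d(\,\cdot\,)$ are in hand.
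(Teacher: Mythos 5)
Your proof is correct, and it is worth noting at the outset that the paper itself gives no proof of this theorem: it is stated as a citation to Osborn [10], so there is no internal argument to compare against line by line. Your route --- taking $\theta$ to be the natural surjection $I_R/I_R^3\rightarrow I_R/I_R^2$ induced by the inclusion $I_R^3\subseteq I_R^2$, so that $Ker\theta=I_R^2/I_R^3$, and then identifying generators of this subquotient --- is the standard one under the identification $\Omega^{(q)}(R/k)\cong I_R/I_R^{q+1}$ that the paper fixes in its introduction, and every step checks out. The computational heart of your argument, the identity $\delta^{(2)}(ab)-a\delta^{(2)}(b)-b\delta^{(2)}(a)=\overline{d(a)d(b)}$ with $d(f)=1\otimes f-f\otimes 1$, does in fact appear in the paper, but as the proof of Proposition 2.3 (there written in the reverse direction, as $\delta^{(1)}(a).\delta^{(1)}(b)=\delta^{(2)}(ab)-a\delta^{(2)}(b)-b\delta^{(2)}(a)$). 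What you supply that the paper nowhere makes explicit is the converse inclusion: that these classes generate all of $Ker\theta$. Your argument for this is sound --- any $\sum_i x_i\otimes y_i\in I_R$ equals $\sum_i x_i\cdot d(y_i)$ because $\sum_i x_iy_i=0$, so $I_R$ is generated as a left $R$-module by the $d(f)$; hence $I_R^2$ is generated as a left $R$-module by the products $d(a)d(b)$, since coefficients of the form $a\otimes 1$ multiply out to the left $R$-action; and this generation passes to the quotient $I_R^2/I_R^3$. That is precisely the step the paper leaves implicit, and it is what makes the generation claim in the theorem a proof rather than an observation about one inclusion. One small caveat: your closing remark that affineness lets the generators be taken over a finite generating set of $R$ is not needed for the statement (which quantifies over all $a,b\in R$) and, as written, would itself need a further argument (expanding $d$ of a product of generators modulo $I_R^3$); but nothing in your proof rests on it.
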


\begin{prop}[Osborn, 10] $S^{2}(\Omega^{(1)}(R/k))\simeq Ker\theta $
\end{prop}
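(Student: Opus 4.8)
The plan is to exhibit mutually inverse $R$-linear maps between $S^2(\Omega^{(1)}(R/k))$ and $Ker\theta$; the real work lies in constructing one of them honestly, after which the other map and the inverse checks are formal.

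First I would build a map $\psi:Ker\theta\to S^2(\Omega^{(1)})$ directly out of the symmetric derivation $D$. Put $d:=D\circ D|_{R}:R\to S^2(\Omega^{(1)})$, which indeed lands in $S^2$ by property (i), since $\delta^{(1)}=D|_{R}$ sends $R$ into $S^1$. Using that $D$ is a first order $k$-derivation of $S(\Omega^{(1)})$ with $D|_{R}=\delta^{(1)}$, a short computation gives $D(ab)=a\delta^{(1)}(b)+b\delta^{(1)}(a)$ and then
$$D^2(ab)-a\,D^2(b)-b\,D^2(a)=2\,\delta^{(1)}(a)\,\delta^{(1)}(b).$$
Carrying out the same bookkeeping on a triple product $abc$ shows that every term involving $d(a),d(b),d(c)$ and every mixed term cancels, so $d$ is a $k$-derivation of order $\le 2$ with $d(1)=0$. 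By the universal mapping property of $\{\Omega^{(2)}(R/k),\delta^{(2)}\}$ with respect to second order $k$-derivations, there is a unique $R$-linear $h:\Omega^{(2)}(R/k)\to S^2(\Omega^{(1)})$ with $h\circ\delta^{(2)}=d$. Evaluating $h$ on a generator of $Ker\theta$ and using $R$-linearity gives $h(\delta^{(2)}(ab)-a\delta^{(2)}(b)-b\delta^{(2)}(a))=2\,\delta^{(1)}(a)\delta^{(1)}(b)$. Since $\operatorname{char}k=0$ I may set $\psi:=\tfrac12\,h|_{Ker\theta}$, so that $\psi$ carries the generator to $\delta^{(1)}(a)\delta^{(1)}(b)$; as these products generate $S^2(\Omega^{(1)})$, the map $\psi$ is surjective.

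Next I would produce the reverse map $\phi:S^2(\Omega^{(1)})\to Ker\theta$ sending $\delta^{(1)}(a)\delta^{(1)}(b)$ to $g(a,b):=\delta^{(2)}(ab)-a\delta^{(2)}(b)-b\delta^{(2)}(a)$. Under the identifications $\Omega^{(1)}\cong I_{R}/I_{R}^{2}$, $\Omega^{(2)}\cong I_{R}/I_{R}^{3}$ and $Ker\theta\cong I_{R}^{2}/I_{R}^{3}$, this $\phi$ is nothing but the map induced by the multiplication $I_{R}/I_{R}^{2}\otimes_{R}I_{R}/I_{R}^{2}\to I_{R}^{2}/I_{R}^{3}$. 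That pairing is $R$-bilinear and symmetric because $R\otimes_{k}R$ is commutative, so it factors through $S^2(\Omega^{(1)})$, and a direct comparison shows $g(a,b)$ corresponds to the class of $\delta^{(1)}(a)\delta^{(1)}(b)$; hence $\phi$ is well defined and $R$-linear.

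Finally, since both maps are $R$-linear and $S^2(\Omega^{(1)})$ (respectively $Ker\theta$) is generated by the elements $\delta^{(1)}(a)\delta^{(1)}(b)$ (respectively the $g(a,b)$), it suffices to verify $\psi\circ\phi$ and $\phi\circ\psi$ on these generators, where each visibly reduces to the identity. Thus $\phi$ and $\psi$ are mutually inverse and $S^2(\Omega^{(1)}(R/k))\simeq Ker\theta$. I expect the main obstacle to be the first step: checking cleanly that $D^2$ is a genuine derivation of order $\le 2$ and computing its symbol as $2\,\delta^{(1)}(a)\delta^{(1)}(b)$, since this is exactly what forces $\psi$ to exist and isolates the factor that needs $\operatorname{char}k=0$ to invert. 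By contrast, the well-definedness of $\phi$ and the final generator-wise inverse checks are routine once the $I_{R}/I_{R}^{n}$ description is in hand.
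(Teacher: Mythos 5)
The half of your argument that does not use a symmetric derivation --- the map $\phi$ induced by multiplication under the identifications $\Omega^{(1)}\cong I_{R}/I_{R}^{2}$, $\Omega^{(2)}\cong I_{R}/I_{R}^{3}$, $Ker\theta\cong I_{R}^{2}/I_{R}^{3}$ --- is precisely the paper's entire proof: the displayed computation $(1\otimes a-a\otimes 1)(1\otimes b-b\otimes 1)=\delta^{(2)}(ab)-a\delta^{(2)}(b)-b\delta^{(2)}(a)$ matching generators of $S^{2}(\Omega^{(1)}(R/k))$ with the generators of $Ker\theta$ from Theorem 2.2. The genuine gap is in your other half, the map $\psi$: you build it ``directly out of the symmetric derivation $D$,'' but the proposition carries no hypothesis that such a $D$ exists, and Definition 2.1 only says what a symmetric derivation would be --- it does not assert existence. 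In this paper that is not a pedantic point: every result in Section 3 that uses a symmetric derivation (Theorem 3.9 and Corollaries 3.10--3.15) explicitly adds the hypothesis that $S(\Omega^{(1)}(R/k))$ \emph{has at least one} symmetric derivation, and the Conclusion states outright, citing Corollary 3.13 and Example 3.16 (the ring $k[x,y,z]$ with $y^{2}=xz$, $z^{2}=x^{3}$, where the projective dimension of $\Omega^{(1)}$ is $1$ but that of $\Omega^{(2)}$ is infinite), that a symmetric derivation cannot be defined for an arbitrary ring. So the step of your proof that supplies injectivity of the natural surjection $S^{2}(\Omega^{(1)}(R/k))\rightarrow Ker\theta$ is simply unavailable in the generality in which the proposition is stated; for a general affine $k$-algebra your argument proves only surjectivity.

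To be clear about what is salvageable: when a symmetric derivation $D$ does exist, your construction of $\psi$ is sound --- the verification that $D^{2}|_{R}$ is a second order $k$-derivation, the identity $D^{2}(ab)-aD^{2}(b)-bD^{2}(a)=2\,\delta^{(1)}(a)\delta^{(1)}(b)$, and the use of characteristic zero to divide by $2$ all check out --- but what you have then written is essentially the paper's proof of Theorem 3.9, where exactly this map ($t$ with $t\delta^{(2)}=D_{1}\delta^{(1)}$ and $t(\delta^{(2)}(ab)-a\delta^{(2)}(b)-b\delta^{(2)}(a))=2\delta^{(1)}(a)\delta^{(1)}(b)$) is used to split the exact sequence, under that explicit extra hypothesis. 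A proof of Proposition 2.3 as stated must instead establish injectivity of $\phi$ by working inside $R\otimes_{k}R$ (as in Osborn [10]), without appealing to $D$.
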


\begin{proof}:$\delta^{(1)}(a).\delta^{(1)}(b)=(1\otimes a-a\otimes 1).(1\otimes b-b\otimes 1)$

$= 1\otimes ab-b\otimes a - a\otimes b+ ab\otimes 1$

$=(1\otimes ab-ab\otimes 1)-a(1\otimes b-b\otimes 1)-b(1\otimes
a-a\otimes 1)$

$=\delta^{(2)}(ab)-a\delta^{(2)}(b)-b\delta^{(2)}(a)$ Then we have
$S^{2}(\Omega^{(1)}(R/k))\simeq Ker\theta $ as required.

\end{proof}

\begin{thm}[Sweedler, 11]Let $R$ be an affine $k$-algebra. If $R$ is a regular ring, then $\Omega^{(q)}(R/k)$ is a projective
$R$-module.
\end{thm}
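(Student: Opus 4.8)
The plan is to argue by induction on $q$, reducing the $q$-th order module to lower orders by means of a short exact sequence of the type already produced in Theorem~2.2 and Proposition~2.3. The crucial preliminary observation is that, since $k$ is algebraically closed of characteristic zero, a regular affine $k$-algebra is automatically smooth over $k$: for every maximal ideal $\mathfrak{m}$ the local ring $R_{\mathfrak{m}}$ is regular, and in this setting the Jacobian criterion applies. This smoothness is exactly what I would exploit both in the base case and in identifying the kernels that appear in the inductive step.

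For the base case $q=1$ I would invoke the classical fact that for a smooth (equivalently, regular) affine $k$-algebra the module $\Omega^{(1)}(R/k)$ is finitely generated and locally free, hence projective. Indeed $R$ is a finitely generated $k$-algebra, so $\Omega^{(1)}(R/k)$ is finitely generated over the Noetherian ring $R$, and for finitely generated modules projectivity is equivalent to local freeness; the Jacobian criterion then shows that $\Omega^{(1)}(R/k)_{\mathfrak{m}}$ is free of rank $\dim R_{\mathfrak{m}}$ at each maximal ideal $\mathfrak{m}$.

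For the inductive step I would use the order-$q$ analogue of Theorem~2.2 and Proposition~2.3, namely the short exact sequence $0 \to S^{q}(\Omega^{(1)}(R/k)) \to \Omega^{(q)}(R/k) \to \Omega^{(q-1)}(R/k) \to 0$, in which the surjection is the canonical projection $I_R/I_R^{q+1}\to I_R/I_R^{q}$ and the kernel $I_R^{q}/I_R^{q+1}$ is identified with $S^{q}(\Omega^{(1)}(R/k))=S^{q}(I_R/I_R^{2})$. Granting this sequence, the conclusion is quick. Since $\Omega^{(1)}(R/k)$ is projective by the base case, it is a direct summand of a finite free module $R^{m}$; applying $S^{q}$ and using $S^{q}(P\oplus Q)=\bigoplus_{i+j=q}S^{i}(P)\otimes_R S^{j}(Q)$ exhibits $S^{q}(\Omega^{(1)}(R/k))$ as a direct summand of the free module $S^{q}(R^{m})$, so it too is projective. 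By the inductive hypothesis $\Omega^{(q-1)}(R/k)$ is projective, so the surjection in the sequence admits a section; hence $\Omega^{(q)}(R/k)\cong S^{q}(\Omega^{(1)}(R/k))\oplus\Omega^{(q-1)}(R/k)$ is a direct sum of two projective modules and is therefore projective.

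The main obstacle is justifying the generalized short exact sequence, i.e. the identification $I_R^{q}/I_R^{q+1}\cong S^{q}(\Omega^{(1)}(R/k))$ for all $q$. The natural surjection $S^{q}(I_R/I_R^{2})\twoheadrightarrow I_R^{q}/I_R^{q+1}$ always exists, but its injectivity is precisely the statement that the diagonal ideal $I_R\subseteq R\otimes_k R$ is locally generated by a regular sequence, equivalently that the associated graded ring $\mathrm{gr}_{I_R}(R\otimes_k R)$ is the symmetric algebra on $I_R/I_R^{2}$. This is where regularity is indispensable: for smooth $R$ the diagonal is a regular immersion, so the comparison map is an isomorphism and the $I_R$-adic filtration of $I_R/I_R^{q+1}$ splits off its top graded piece. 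The case $q=2$ is exactly Proposition~2.3; the general case follows by carrying out the same computation on the associated graded, localizing at each maximal ideal $\mathfrak{m}$ where a regular system of parameters is available to guarantee the regular-sequence condition.
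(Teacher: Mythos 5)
The paper does not actually prove this statement: Theorem 2.4 is quoted from Sweedler [11] as a preliminary, with no proof supplied, so there is no internal argument to compare yours against. Judged on its own merits, your proof is correct and is essentially the standard argument. The base case $q=1$ is one direction of the paper's Theorem 2.5 (regular $\Rightarrow$ $\Omega^{(1)}(R/k)$ projective), which you could simply cite rather than re-derive via the Jacobian criterion. The inductive step hinges on the identification $I_{R}^{q}/I_{R}^{q+1}\cong S^{q}(\Omega^{(1)}(R/k))$, which generalizes Proposition 2.3 from $q=2$ to all $q$; you correctly isolate this as the only place regularity enters, and the justification you sketch is sound: over an algebraically closed field of characteristic zero, regular affine implies smooth, hence $R\otimes_{k}R$ is regular, the diagonal ideal $I_{R}$ is locally generated by a regular sequence, and for such ideals the canonical surjection $S^{q}(I_{R}/I_{R}^{2})\rightarrow I_{R}^{q}/I_{R}^{q+1}$ is an isomorphism. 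That fact is standard but nontrivial, and in a final write-up it deserves a precise citation (e.g.\ to EGA IV or Matsumura) rather than a sketch; note that it is exactly the point the paper's own proof of Proposition 2.3 glosses over, since that proof only checks that the generators of $\mathrm{Ker}\,\theta$ are products $\delta^{(1)}(a)\delta^{(1)}(b)$, i.e.\ surjectivity, and never addresses injectivity. The remaining steps --- that $S^{q}$ of a projective module is projective via $S^{q}(P\oplus Q)\cong\bigoplus_{i+j=q}S^{i}(P)\otimes_{R}S^{j}(Q)$, and that a short exact sequence with projective quotient splits --- are routine and correctly executed. Your argument in fact yields more than the statement: for regular $R$ it gives the decomposition $\Omega^{(q)}(R/k)\cong\bigoplus_{i=1}^{q}S^{i}(\Omega^{(1)}(R/k))$, which recovers, without any hypothesis about the existence of a symmetric derivation, the split exact sequence that the paper only obtains in Theorem 3.9 under that extra assumption.
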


\begin{thm}[McConnell and Rabson, 6]Let $R$ be an affine $k$-algebra. $R$ is regular ring if and only if $\Omega^{(1)}(R/k)$ is a projective
$R$-module.
\end{thm}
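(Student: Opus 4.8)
The plan is to treat the two implications separately, since the forward direction is essentially already available from the material in the excerpt. First, suppose $R$ is regular. Then the preceding theorem of Sweedler, applied with $q=1$, immediately gives that $\Omega^{(1)}(R/k)$ is a projective $R$-module, so this implication needs no further argument; all of the content lies in the converse.

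For the converse, I would assume $\Omega^{(1)}(R/k)$ is projective and show that $R$ is regular. Since $R$ is affine (finitely generated) over $k$ and finitely generated projective modules over a Noetherian ring are locally free, $\Omega^{(1)}(R/k)$ is locally free, and in particular its rank is locally constant on $\operatorname{Spec} R$. Because regularity is a local condition, it suffices to fix a maximal ideal $\mathfrak{m}$ and prove that $R_{\mathfrak{m}}$ is regular; and because $k$ is algebraically closed, the Nullstellensatz gives $R/\mathfrak{m}\cong k$.

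The key step is to identify the fiber of $\Omega^{(1)}(R/k)$ at $\mathfrak{m}$ with the Zariski cotangent space. Applying the second fundamental (conormal) exact sequence to $k\to R\to R/\mathfrak{m}=k$ yields
$$\mathfrak{m}/\mathfrak{m}^{2}\xrightarrow{\ d\ }\Omega^{(1)}(R/k)\otimes_{R}k\longrightarrow 0,$$
where the right-hand term vanishes since $R/\mathfrak{m}=k$; and because the residue field coincides with the ground field $k$, hence is trivially separable (here $\operatorname{char}k=0$ is used), the map $d$ is injective. This gives an isomorphism $\Omega^{(1)}(R/k)\otimes_{R}k\cong\mathfrak{m}/\mathfrak{m}^{2}$, so $\dim_{k}\mathfrak{m}/\mathfrak{m}^{2}$ equals the local rank $n$ of the locally free module $\Omega^{(1)}(R/k)$ at $\mathfrak{m}$. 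It then remains to match $n$ with $\dim R_{\mathfrak{m}}$: I would compute the rank at the generic point of the component through $\mathfrak{m}$, where $R$ localizes to the function field, a finitely generated separable (characteristic zero) extension of $k$, so the rank there equals the transcendence degree, i.e. the dimension of that component. Local constancy of the rank forces $n=\dim R_{\mathfrak{m}}$, whence $\dim_{k}\mathfrak{m}/\mathfrak{m}^{2}=\dim R_{\mathfrak{m}}$ and $R_{\mathfrak{m}}$ is regular; as $\mathfrak{m}$ was arbitrary, $R$ is regular.

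The main obstacle is the converse, and within it the passage from \emph{locally free} to the numerical equality $\dim_{k}\mathfrak{m}/\mathfrak{m}^{2}=\dim R_{\mathfrak{m}}$. This is precisely where the hypotheses on $k$ are genuinely needed: the cotangent identification requires the residue field to equal $k$ (supplied by the Nullstellensatz), and the identification of the generic rank with the dimension requires separability (supplied by $\operatorname{char}k=0$). Any subtleties arising from non-reducedness or non-equidimensionality are handled by arguing one irreducible component at a time, which is legitimate because the rank of $\Omega^{(1)}(R/k)$ is already known to be locally constant.
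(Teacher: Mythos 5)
The paper itself gives no proof of this statement: it is quoted as a known result from McConnell--Robson [6] (Theorem 2.5 of the preliminaries), so your proposal has to be judged on its own merits rather than against an internal argument. Your forward direction (regular $\Rightarrow$ projective, via Sweedler's theorem with $q=1$) is fine, and in the converse the reduction to a maximal ideal $\mathfrak{m}$, the identification $\mathfrak{m}/\mathfrak{m}^{2}\cong\Omega^{(1)}(R/k)\otimes_{R}k$ via the conormal sequence and the Nullstellensatz, and the comparison of the fiber rank with the rank at a generic point are the standard steps --- and they do succeed when $R$ is \emph{reduced}.

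The gap is the non-reduced case, and it is not cosmetic. Your phrase ``$R$ localizes to the function field'' at the generic point of a component presupposes that $R$ is generically reduced: if $R$ has nilpotents, the local ring at a minimal prime $\mathfrak{p}$ is artinian, not a field, and $\dim_{\kappa(\mathfrak{p})}\Omega^{(1)}(R/k)\otimes_{R}\kappa(\mathfrak{p})$ can exceed $\mathrm{trdeg}_{k}\,\kappa(\mathfrak{p})$; the conormal sequence only gives a surjection onto $\Omega^{(1)}(\kappa(\mathfrak{p})/k)$, hence the inequality $n\geq\dim R/\mathfrak{p}$, which points the same way as Krull's bound $\dim R_{\mathfrak{m}}\leq\dim_{k}\mathfrak{m}/\mathfrak{m}^{2}=n$ and therefore proves nothing. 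Nor does ``arguing one irreducible component at a time'' repair this: passing to $R/\mathfrak{p}$ changes the module of differentials, and projectivity of $\Omega^{(1)}(R/k)$ does not descend to $\Omega^{(1)}((R/\mathfrak{p})/k)$. That something essential is missing is visible from characteristic $p$: over an algebraically closed field of characteristic $p$ every separability property you invoke still holds (the base is perfect, residue fields at maximal ideals are $k$, function fields are separably generated), yet $R=k[x]/(x^{p})$ has $\Omega^{(1)}(R/k)\cong R$ free of rank one while $R$ is not regular. So the statement is false in characteristic $p$, and a proof using only separability cannot be complete; characteristic zero must enter precisely to exclude nilpotents. For instance, one must show that an artinian local $k$-algebra essentially of finite type with free differentials is a field: freeness forces the defining ideal $J\subseteq(Y_{1},\dots,Y_{r})^{2}$ over a coefficient field to be closed under partial derivatives, and then Euler's formula (characteristic zero) applied to a nonzero element of minimal order forces $J=0$. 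Your proposal contains no step playing this role, so as written it is incomplete.
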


\section{The Generalized Symmetric Derivations}

\begin{df}Let $R$ be any $k$-algebra (commutative with unit),
$R\rightarrow \Omega^{(q)}(R/k)$ be  $q$-th order K\"{a}hler
derivation of $R$ and let $S(\Omega^{(q)}(R/k))$ be the symmetric
algebra $\bigoplus_{p\geq 0}S^{p}(\Omega^{(q)}(R/k))$ generated over
$R$ by $\Omega^{(q)}(R/k)$.

A generalized symmetric derivation is any $k$-linear map $D$ of
$S(\Omega^{(q)}(R/k))$ into itself such that

i) $D(S^{p}(\Omega^{(q)}(R/k)))\subset S^{p+1}(\Omega^{(q)}(R/k))$

ii) $D$ is a $q$-th order derivation over $k$ and

iii) the restriction of $D$ to $R$ ($R\simeq
S^{0}(\Omega^{(q)}(R/k))$) is the K\"{a}hler derivation $\delta^{(q)}:
R\rightarrow \Omega^{(q)}(R/k)$
\end{df}

\begin{ex}Let $R=k[x_{1},....,x_{s}]$ be a polynomial algebra of dimension s. Then
$\Omega^{(q)}(R/k)$ is a free $R$-module of rank
$(\begin{array}{c}
  q+s \\
  s
\end{array})-1$
with basis
$\{\delta^{(q)}(x_{1}^{i_{1}}.....x_{s}^{i_{s}}):i_{1}+.....+i_{s}\leq q\}$

$S^{2}(\Omega^{(q)}(R/k))$ is a free $R$-module of rank $(\begin{array}{c}
  t+1 \\
  t-1
\end{array})$

where $t=(\begin{array}{c}
  q+s \\
  s
\end{array})-1$
with basis
$\{\delta^{(q)}(x_{1}^{i_{1}}.....x_{s}^{i_{s}})\otimes \delta^{(q)}(x_{1}^{i_{1}}.....x_{s}^{i_{s}}):i_{1}+.....+i_{s}\leq
q\}$

\end{ex}

\begin{thm}Let $R$ be an affine $k$-algebra. Then there exists a
long exact sequence of $R$ modules
  $$0\rightarrow Ker\theta \rightarrow
\Omega^{(2q)}(R/k)\overset{\theta}{\rightarrow}
J_{q}(\Omega^{(q)}(R/k))\rightarrow Coker\theta \rightarrow 0$$ for all $q\geq 0$
\end{thm}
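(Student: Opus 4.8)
The plan is to reduce the entire statement to the construction of the middle map $\theta$, since for any homomorphism of $R$-modules the associated kernel--cokernel sequence is automatically exact; once $\theta$ is produced, the four-term sequence in the statement requires no further argument. So all of the content lies in building a natural $R$-linear map $\theta \colon \Omega^{(2q)}(R/k) \to J_q(\Omega^{(q)}(R/k))$ that generalizes the Osborn map $\theta(\delta^{(2)}(f)) = \delta^{(1)}(f)$ of the case $q=1$.

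First I would invoke the universal $q$-th order differential $d \colon \Omega^{(q)}(R/k) \to J_q(\Omega^{(q)}(R/k))$ of the $R$-module $\Omega^{(q)}(R/k)$, together with the canonical $q$-th order $k$-derivation $\delta^{(q)} \colon R \to \Omega^{(q)}(R/k)$. The crucial step is to show that the composite
$$ D := d \circ \delta^{(q)} \colon R \longrightarrow J_q(\Omega^{(q)}(R/k)) $$
is a $2q$-th order $k$-derivation of $R$. This rests on the behaviour of differential operators under composition: writing $[D,a](x) = D(ax)-aD(x)$ for $a \in R$, one has the Leibniz identity $[d\circ\delta^{(q)},a] = d\circ[\delta^{(q)},a] + [d,a]\circ\delta^{(q)}$, and since each commutator lowers order by one, an induction on $q$ gives that an operator of order $\le q$ followed by an operator of order $\le q$ has order $\le 2q$. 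Because moreover $D(1) = d(\delta^{(q)}(1)) = d(0) = 0$, the map $D$ is a genuine $2q$-th order derivation.

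With $D$ recognized as a $2q$-th order derivation, I would apply the universal mapping property of the pair $\{\Omega^{(2q)}(R/k),\delta^{(2q)}\}$ with respect to $2q$-th order $k$-derivations. This produces a unique $R$-linear map $\theta \colon \Omega^{(2q)}(R/k) \to J_q(\Omega^{(q)}(R/k))$ with $\theta \circ \delta^{(2q)} = D$, that is $\theta(\delta^{(2q)}(f)) = d(\delta^{(q)}(f))$ for all $f \in R$, which is the direct analogue of the Osborn formula. Setting $Ker\theta$ and $Coker\theta$ to be the kernel and cokernel of this $\theta$, the sequence
$$ 0 \to Ker\theta \to \Omega^{(2q)}(R/k) \overset{\theta}{\to} J_q(\Omega^{(q)}(R/k)) \to Coker\theta \to 0 $$
is exact by construction, and the degenerate case $q=0$ is immediate.

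I expect the main obstacle to be the careful verification that $d\circ\delta^{(q)}$ genuinely has order $\le 2q$ into the jet module $J_q(\Omega^{(q)}(R/k))$ -- in particular matching the $R$-module structures on source and target so that the commutator computation is legitimate and $\theta$ is well defined on the defining relations of $\Omega^{(2q)}(R/k)$. A secondary, optional task, paralleling the proof of the $q=1$ Osborn result, would be to describe $Ker\theta$ explicitly through generators of the form $\delta^{(2q)}(ab) - a\,\delta^{(2q)}(b) - b\,\delta^{(2q)}(a)$ and their higher-order analogues; this concrete identification, rather than the mere existence of the exact sequence, is where the real effort would be concentrated if one wanted more than the formal statement.
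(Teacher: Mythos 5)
Your proposal follows essentially the same route as the paper: both construct $\theta$ by composing the universal differential operator $\Delta_q \colon \Omega^{(q)}(R/k) \to J_q(\Omega^{(q)}(R/k))$ with $\delta^{(q)}$ and invoking the universal mapping property of $\{\Omega^{(2q)}(R/k), \delta^{(2q)}\}$ to get $\theta\,\delta^{(2q)} = \Delta_q\,\delta^{(q)}$, after which the four-term kernel--cokernel sequence is exact for formal reasons. In fact your write-up is more complete than the paper's, since you explicitly verify the one nontrivial point --- that $\Delta_q \circ \delta^{(q)}$ is a $2q$-th order $k$-derivation, so the universal property actually applies --- which the paper passes over in silence.
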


\begin{proof}: Let $R$ be any $k$-algebra,
$\Omega^{(q)}(R/k)$ be  $q$-th order K\"{a}hler
derivation of $R$. Let $J_{q}(\Omega^{(q)}(R/k))$ be  $q$-th order universal module of differential operators of order less than or equal to $q$ on $\Omega^{(q)}(R/k)$ with the universal differential operator $\Delta_{q}:\Omega^{(q)}(R/k)\rightarrow J_{q}(\Omega^{(q)}(R/k))$.

By the universal mapping property of $\Omega^{(2q)}(R/k)$ there exists a unique $R$-module homomorphism $\theta: \Omega^{(2q)}(R/k)\rightarrow J_{q}(\Omega^{(q)}(R/k))$ such that $\theta \delta^{2q}=\Delta_{q}\delta^{q}$ and the following diagram commutes.

   $$\begin{array}{ccc}
      R & \rightarrow^{\delta^{q}} & \Omega^{(q)}(R/k) \\
      \downarrow \delta^{2q} &   & \downarrow \Delta_{q}  \\
      \Omega^{(2q)}(R/k) & \rightarrow^{\theta} & J_{q}(\Omega^{(q)}(R/k)) \\
    \end{array}$$

 From homological properties we obtain the sequence as required.
\end{proof}

 \begin{cor} If $R$ be any regular $k$-algebra with dimension $s$ then $\theta$ is injective.
 \end{cor}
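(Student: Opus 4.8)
The plan is to establish injectivity locally. Since $\Omega^{(2q)}(R/k)$ is a finitely generated module over the Noetherian ring $R$, so is $\mathrm{Ker}\,\theta$, and hence it suffices to prove $(\mathrm{Ker}\,\theta)_{\mathfrak m}=0$ for every maximal ideal $\mathfrak m$ of $R$. Localization is exact and commutes with the formation of the universal modules $\Omega^{(\bullet)}(-/k)$ and of the jet modules $J_q(-)$, so $(\mathrm{Ker}\,\theta)_{\mathfrak m}=\mathrm{Ker}(\theta_{\mathfrak m})$, where $\theta_{\mathfrak m}$ is the map of Theorem 3.3 built from $R_{\mathfrak m}$. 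This reduces the problem to the regular local $k$-algebra $R_{\mathfrak m}$.

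Next I would exploit the standing hypotheses. Since $k$ is algebraically closed, the Nullstellensatz gives residue field $k$ at each $\mathfrak m$, so $R_{\mathfrak m}$ is a regular local $k$-algebra of dimension $s$; as $\mathrm{char}\,k=0$, regularity is smoothness and there is a regular system of parameters $x_1,\dots,x_s$ serving as \'etale coordinates. By Theorem 2.4 the K\"ahler modules $\Omega^{(q)}(R_{\mathfrak m}/k)$ and $\Omega^{(2q)}(R_{\mathfrak m}/k)$ are projective, hence free, with the monomial bases of Example 3.2; and $J_q$ of a free module over a regular local ring is again free. Thus every module in the sequence is free over $R_{\mathfrak m}$, and $\theta_{\mathfrak m}$ is a map of finite free modules, of rank $\binom{2q+s}{s}-1$ in the source.

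With these bases fixed I would write $\theta_{\mathfrak m}$ as an explicit matrix, using only the defining relation $\theta\,\delta^{(2q)}=\Delta_q\,\delta^{(q)}$ and the order-$q$ Leibniz rule for the universal operator $\Delta_q$. Expanding $\theta(\delta^{(2q)}(x^{\alpha}))=\Delta_q(\delta^{(q)}(x^{\alpha}))$ and sorting by the order filtration on $J_q(\Omega^{(q)})$ exhibits a triangular pattern whose diagonal entries are the integer coefficients produced by differentiating the monomials $x^{\alpha}$. These coefficients are products of factorials, which are nonzero because $\mathrm{char}\,k=0$; since $R_{\mathfrak m}$ is a regular local ring and therefore an integral domain, the resulting maximal minor is a nonzero element of $R_{\mathfrak m}$, and a map of free modules over a domain possessing a nonvanishing maximal minor is injective. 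Hence $\mathrm{Ker}\,\theta_{\mathfrak m}=0$ for all $\mathfrak m$, giving $\mathrm{Ker}\,\theta=0$.

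I expect the genuine difficulty to lie in the last paragraph: one must make the local description of $\theta_{\mathfrak m}$ precise enough to read off a nonvanishing maximal minor, keeping careful track of how $\Delta_q$ acts on products so that the matrix is seen to have full column rank rather than merely compatible ranks. The essential and only non-formal input is that the numerical coefficients arising from differentiation are invertible in characteristic zero, which is exactly where the hypothesis on $k$ enters; the reductions in the first two paragraphs are routine once one grants that differentials and jet modules localize and that regular local $k$-algebras in characteristic zero carry \'etale coordinates.
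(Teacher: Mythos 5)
Your proposal is correct in substance and, at its core, follows the same route as the paper's own proof: in the regular case one passes to coordinates, takes the monomial basis $\{\delta^{(2q)}(x^{\alpha}):|\alpha|\le 2q\}$ of $\Omega^{(2q)}(R/k)$ and the basis $\{\Delta_{q}(x^{\beta}\delta^{(q)}(x^{\gamma})):|\beta|\le q,\ 1\le|\gamma|\le q\}$ of $J_{q}(\Omega^{(q)}(R/k))$, and checks injectivity of $\theta$ there. The difference is one of completeness, and it favors you. The paper's entire proof is the single assignment $\theta(\delta^{(2q)}(x^{\alpha}))=\Delta_{q}(x^{\beta}\delta^{(q)}(x^{\gamma}))$: it never localizes (read literally it treats $R$ as a polynomial ring), it never argues that the images are independent, and the displayed formula is not literally correct, since $\delta^{(q)}(x^{\beta}x^{\gamma})$ is not $x^{\beta}\delta^{(q)}(x^{\gamma})$ but a sum over all decompositions of $\alpha$ with integer coefficients --- precisely the Leibniz expansion you invoke. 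Your localization reduction and rank argument supply exactly what the paper omits. One refinement would tighten your last paragraph: the matrix of $\theta_{\mathfrak{m}}$ is not triangular in any evident ordering, because the terms that drop total degree carry non-constant coefficients; the clean formulation is to reduce modulo $\mathfrak{m}$, where only the degree-preserving terms survive, the columns indexed by distinct $\alpha$ then have disjoint supports (the row label $(\beta,\gamma)$ determines $\alpha=\beta+\gamma$), and each column is nonzero because the multinomial-type coefficients do not vanish in characteristic zero; injectivity of $\theta_{\mathfrak{m}}$ modulo $\mathfrak{m}$ then forces $\theta_{\mathfrak{m}}$ to be a split injection of free modules. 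Like the paper, you leave the Leibniz computation itself as an assertion, but unlike the paper you have identified it as the crux and set up a framework in which it actually proves the statement.
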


\begin{proof}: We define $\theta(\delta^{2q}(x^{\alpha} ))= \Delta_{q}( x^{\beta} \delta^{q}(x^{\gamma}))$

where

$x^{\alpha}=x_{1}^{i_{1}}.....x_{s}^{i_{s}}  : i_{1}+.....+i_{s}\leq 2q$

      $x^{\beta}=x_{1}^{i_{1}}.....x_{s}^{i_{s}}  : i_{1}+.....+i_{s}\leq q$

      $x^{\gamma}=x_{1}^{i_{1}}.....x_{s}^{i_{s}} : i_{1}+.....+i_{s}\leq q$
\end{proof}

\begin{cor}[Erdogan] If $R$ be any regular $k$-algebra with dimension $1$ and $q=1$ then $\Omega^{(2)}(R/k))$ is isomorphic to $J_{1}(\Omega^{(1)}(R/k))$.
\end{cor}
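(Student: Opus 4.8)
The plan is to specialize the long exact sequence of Theorem~3.3 to the case $q=1$ and then force its two outer terms to vanish. Setting $q=1$ produces
$$0\rightarrow Ker\theta \rightarrow \Omega^{(2)}(R/k)\overset{\theta}{\rightarrow} J_{1}(\Omega^{(1)}(R/k))\rightarrow Coker\theta \rightarrow 0,$$
and since $R$ is regular, Corollary~3.4 already guarantees that $\theta$ is injective, so $Ker\theta=0$. Thus everything reduces to proving $Coker\theta=0$, i.e. that $\theta$ is surjective; this is the crux of the argument.

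First I would record that all the modules in play are projective: by Theorem~2.4 regularity forces every $\Omega^{(q)}(R/k)$ to be projective, and $J_{1}(\Omega^{(1)}(R/k))$ is projective because it sits in the fundamental first-order jet sequence
$$0\rightarrow \Omega^{(1)}(R/k)\otimes_{R}\Omega^{(1)}(R/k)\rightarrow J_{1}(\Omega^{(1)}(R/k))\rightarrow \Omega^{(1)}(R/k)\rightarrow 0,$$
whose outer terms are projective. Next I would compute ranks. Since $\dim R=1$, Example~3.2 gives $\mathrm{rank}\,\Omega^{(2)}(R/k)=\binom{3}{1}-1=2$ and $\mathrm{rank}\,\Omega^{(1)}(R/k)=\binom{2}{1}-1=1$, so the jet sequence above yields $\mathrm{rank}\,J_{1}(\Omega^{(1)}(R/k))=1+1=2$ as well. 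Hence $\theta$ is an injection of projective modules of equal rank, and therefore $Coker\theta$ is a finitely generated torsion module.

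The hard part is to upgrade \emph{torsion cokernel} to \emph{zero cokernel}, since an injection of equal-rank modules need not be surjective (as $\mathbb{Z}\xrightarrow{2}\mathbb{Z}$ shows). I would settle this by a local computation in coordinates, modelled on $R=k[x]$: writing $\omega=\delta^{(1)}(x)$, the target is free on $\{\Delta_{1}(\omega),\,\omega\otimes\omega\}$, while the explicit formula of Corollary~3.4 gives $\theta(\delta^{(2)}(x))=\Delta_{1}(\omega)$ and $\theta(\delta^{(2)}(x^{2}))=\Delta_{1}(x\omega)=x\Delta_{1}(\omega)+\omega\otimes\omega$, the last equality being the first-order Leibniz rule for the universal operator $\Delta_{1}$. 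The matrix of $\theta$ in these bases is $\left(\begin{smallmatrix}1 & x\\ 0 & 1\end{smallmatrix}\right)$, with determinant the unit $1$, so $\theta$ sends a basis to a basis. Because the statement is local and $R$ is one-dimensional regular, it suffices to verify this unit-determinant condition after localizing at each maximal ideal $\mathfrak{m}$, where $R_{\mathfrak{m}}$ is a DVR, the modules become free of rank $2$, and the analogous computation in a uniformizer applies. Thus $Coker\theta$ vanishes after every localization, whence $Coker\theta=0$ and $\theta$ is the asserted isomorphism. I expect this surjectivity step to be the only genuine obstacle; the remaining pieces are formal consequences of Theorem~3.3, Corollary~3.4, and the rank bookkeeping of Example~3.2.
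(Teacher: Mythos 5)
Your proof is correct, but it is considerably more complete than the paper's own argument, which consists entirely of the rank count: the paper observes that for $s=1$, $q=1$ both $\Omega^{(2)}(R/k)$ and $J_{1}(\Omega^{(1)}(R/k))$ have rank $\binom{3}{1}-1=2$ and stops there, adding only that the ranks differ when $q>1$. You correctly identify that this is not enough --- an injection of projective modules of equal rank can have nonzero torsion cokernel, exactly as your example $\mathbb{Z}\xrightarrow{2}\mathbb{Z}$ shows, and over a Dedekind domain rank alone does not even determine the isomorphism class of a projective module --- and you close that gap with the local unit-determinant computation. That surjectivity step, which you flag as the crux, is precisely what the paper omits; your reduction to a DVR, using that universal modules of differentials commute with localization and Nakayama's lemma to get $\{\delta^{(2)}(t),\delta^{(2)}(t^{2})\}$ as a local basis, is the right way to finish, so your argument both subsumes and repairs the published one. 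One small slip: since $\delta^{(1)}(x^{2})=2x\,\delta^{(1)}(x)$, the second column of your matrix should be $\theta(\delta^{(2)}(x^{2}))=2\Delta_{1}(x\omega)=2x\Delta_{1}(\omega)+2\,\omega\otimes\omega$, so the matrix is $\left(\begin{smallmatrix}1 & 2x\\ 0 & 2\end{smallmatrix}\right)$ with determinant $2$ rather than $1$; this is still a unit because $\mathrm{char}\,k=0$, so the conclusion stands unchanged.
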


\begin{proof}: We have rank of the module $\Omega^{(2)}(R/k))$ is
$t=(\begin{array}{c}
  2+s \\
  s
\end{array})-1$ and it equals to rank of $J_{1}(\Omega^{(1)}(R/k))$.

But this is not true in other cases ($q>1$).[see 2]

\end{proof}

\begin{ex}Let $R=k[x,y]$ be a polynomial algebra of dimension 2. Then
$\Omega^{(1)}(R/k)$ is a free $R$-module of rank 2 with basis
$\{ \delta^{(1)}(x), \delta^{(1)}(y) \}$

$\Omega^{(2)}(R/k)$ is a free $R$-module of rank 5 with basis

$\{ \delta^{(2)}(x), \delta^{(2)}(y), \delta^{(2)}(x^2), \delta^{(2)}(xy), \delta^{(2)}(y^2)\}$

$J_{1}(\Omega^{(1)}(R/k))$ is a free $R$-module of rank 6 with basis

$\{ \Delta_{1}(\delta^{(1)}(x)), \Delta_{1}(\delta^{(1)}(y)), \Delta_{1}(x\delta^{(1)}(x)), \Delta_{1}(x\delta^{(1)}(y)), \Delta_{1}(y\delta^{(1)}(x)), \Delta_{1}(y\delta^{(1)}(y))\}$

$J_{2}(\Omega^{(2)}(R/k))$ is a free $R$-module of rank 30 with basis

$\{ \Delta_{2}(\delta^{(2)}(x)), \Delta_{2}(\delta^{(2)}(y)), \Delta_{2}(\delta^{(2)}(x^2)), \Delta_{2}(\delta^{(2)}(xy)), \Delta_{2}(\delta^{(2)}(y^2)),$

$\Delta_{2}(x\delta^{(2)}(x)), \Delta_{2}(x\delta^{(2)}(y)), \Delta_{2}(x\delta^{(2)}(x^2)), \Delta_{2}(x\delta^{(2)}(xy)), \Delta_{2}(x\delta^{(2)}(y^2)),$

$\Delta_{2}(y\delta^{(2)}(x)), \Delta_{2}(y\delta^{(2)}(y)), \Delta_{2}(y\delta^{(2)}(x^2)), \Delta_{2}(y\delta^{(2)}(xy)), \Delta_{2}(y\delta^{(2)}(y^2)),$

$\Delta_{2}(x^2\delta^{(2)}(x)), \Delta_{2}(x^2\delta^{(2)}(y)), \Delta_{2}(x^2\delta^{(2)}(x^2)), \Delta_{2}(x^2\delta^{(2)}(xy)), \Delta_{2}(x^2\delta^{(2)}(y^2)),$

$\Delta_{2}(xy\delta^{(2)}(x)), \Delta_{2}(xy\delta^{(2)}(y)), \Delta_{2}(xy\delta^{(2)}(x^2)), \Delta_{2}(xy\delta^{(2)}(xy)), \Delta_{2}(xy\delta^{(2)}(y^2)),$

$\Delta_{2}(y^2\delta^{(2)}(x)), \Delta_{2}(y^2\delta^{(2)}(y)), \Delta_{2}(y^2\delta^{(2)}(x^2)), \Delta_{2}(y^2\delta^{(2)}(xy)), \Delta_{2}(y^2\delta^{(2)}(y^2)),\}$

$S^{2}(\Omega^{(1)}(R/k))$ is a free $R$-module of rank 3 with
basis $\{
\delta^{(1)}(x)\otimes\delta^{1}(x),\delta^{(1)}(x)\otimes\delta^{(1)}(y),\delta^{(1)}(y)\otimes\delta^{(1)}(y)
\}$.

In this example $\Omega^{(2)}(R/k))$ is not isomorphic to $J_{1}(\Omega^{(1)}(R/k))$ and we obtain the exact sequence
$$0\rightarrow
S^{2}(\Omega^{(1)}(R/k)) \rightarrow \Omega^{(2)}(R/k)
{\rightarrow} \Omega^{(1)}(R/k)\rightarrow 0$$ of $R$ modules.

\end{ex}

\begin{thm}Let $R$ be an affine $k$-algebra. Then there exists a
long exact sequence of $R$ modules
  $$0\rightarrow Ker\beta \rightarrow J_{q}(\Omega^{(q)}(R/k))\overset{\beta}{\rightarrow}
S^2(\Omega^{(q)}(R/k))\rightarrow Coker\beta \rightarrow 0$$ for all $q\geq 0$

\end{thm}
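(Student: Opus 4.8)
The plan is to treat this statement exactly as Theorem~3.3 was treated. For \emph{any} $R$-module homomorphism $\beta\colon A\to B$ the four-term sequence $0\to Ker\beta\to A\overset{\beta}{\to}B\to Coker\beta\to 0$ is exact as soon as $Ker\beta$ and $Coker\beta$ denote its kernel and cokernel: exactness at $A$ is the definition of the kernel, exactness at $B$ is the definition of the cokernel, and the two end terms record the injectivity of the inclusion and the surjectivity of the quotient projection. Hence the real content is not the exactness but the construction of a canonical map $\beta\colon J_{q}(\Omega^{(q)}(R/k))\to S^{2}(\Omega^{(q)}(R/k))$, after which the homological statement follows formally for every $q\geq 0$.

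To build $\beta$ I would invoke the generalized symmetric derivation of Definition~3.1. If $D$ is such a derivation on $S(\Omega^{(q)}(R/k))$, then by property (i) it carries the degree-one piece $S^{1}(\Omega^{(q)}(R/k))=\Omega^{(q)}(R/k)$ into $S^{2}(\Omega^{(q)}(R/k))$, and by property (ii) the resulting map $D\colon\Omega^{(q)}(R/k)\to S^{2}(\Omega^{(q)}(R/k))$ is a $k$-linear differential operator of order $\leq q$. First I would record that $J_{q}(\Omega^{(q)}(R/k))$ carries the universal operator $\Delta_{q}$ with the property that every order-$\leq q$ operator out of $\Omega^{(q)}(R/k)$ factors uniquely through it. Applying this to $D$ produces a unique $R$-module homomorphism $\beta$ with $\beta\Delta_{q}=D$, i.e. the triangle
$$\begin{array}{ccc}
\Omega^{(q)}(R/k) & \overset{\Delta_{q}}{\rightarrow} & J_{q}(\Omega^{(q)}(R/k)) \\
 & D\searrow & \downarrow \beta \\
 & & S^{2}(\Omega^{(q)}(R/k)) \\
\end{array}$$
commutes. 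Combined with the first paragraph, this yields the long exact sequence as required.

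The hard part will be justifying that the restriction of $D$ to the degree-one component is a differential operator of order $\leq q$ in the precise sense demanded by the universal property of $J_{q}$ — in other words, that the \emph{$q$-th order derivation} of Definition~3.1(ii) coincides with \emph{differential operator of order $\leq q$} as used to define $\Delta_{q}$. I would settle this by testing the iterated $(q+1)$-fold commutator (symbol) condition on the generators $\delta^{(q)}(r)$ of $\Omega^{(q)}(R/k)$, using that $D$ restricts to $\delta^{(q)}$ on $R=S^{0}$ and is a graded derivation of the symmetric algebra, so that its order is inherited from that of $\delta^{(q)}$. It is also worth checking that $\beta$ does not depend on the chosen symmetric derivation, which holds because both $\Delta_{q}$ and $D$ are pinned down by their values on the generating set $\{\delta^{(q)}(r):r\in R\}$.
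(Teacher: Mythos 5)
Your proposal follows essentially the same route as the paper: the paper likewise takes a generalized symmetric derivation $D_{q}\colon\Omega^{(q)}(R/k)\to S^{2}(\Omega^{(q)}(R/k))$, factors it through the universal operator $\Delta_{q}$ via the universal mapping property of $J_{q}(\Omega^{(q)}(R/k))$ to obtain the unique $R$-module homomorphism $\beta$, and then appeals to ``homological properties'' for the formal kernel--cokernel exact sequence. Your additional care about whether the restricted $D$ genuinely has order $\leq q$ (and the implicit need for such a derivation to exist at all) only makes explicit what the paper leaves tacit.
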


\textbf{Proof:} Let $D_{q}:\Omega^{(q)}(R/k))\rightarrow S^2(\Omega^{(q)}(R/k))$ be any generalized symmetric derivation and let $J_{q}(\Omega^{(q)}(R/k))$ be  $q$-th order universal module of differential operators of order less than or equal to $q$ on $\Omega^{(q)}(R/k)$ with the universal differential operator $\Delta_{q}:\Omega^{(q)}(R/k)\rightarrow J_{q}(\Omega^{(q)}(R/k))$.

By the universal mapping property of $J_{q}(\Omega^{(q)}(R/k))$ there exists a unique $R$-module homomorphism $\beta:J_{q}(\Omega^{(q)}(R/k))\rightarrow S^2(\Omega^{(q)}(R/k))$ such that the following diagram commutes.

   $$\begin{array}{ccc}
      \Omega^{(q)}(R/k) & \rightarrow^{D_{q}} & S^2(\Omega^{(q)}(R/k)) \\
      \downarrow \Delta_{q} &   & \downarrow   \\
      J_{q}(\Omega^{(q)}(R/k)) & \rightarrow^{\beta} & S^2(\Omega^{(q)}(R/k)) \\
    \end{array}$$

 From homological properties we have the sequence as required.

\begin{lem}:Let $R$ be an affine domain with dimension $s$.  Then $\Omega^{(q)}(R/k)$ is a free $R$-module if and only if $S^{2}(\Omega^{(q)}(R/k))$ is a free $R$-module.
\end{lem}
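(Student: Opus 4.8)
The plan is to prove the two implications separately, with the forward implication being routine and the reverse one carrying all of the content.

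First I would dispose of the ``only if'' direction. If $M := \Omega^{(q)}(R/k)$ is free of rank $t$ with basis $e_1,\dots,e_t$, then $S^2(M)$ is free with basis $\{\,e_i e_j : 1\le i\le j\le t\,\}$, hence free of rank $\binom{t+1}{2}$. This matches the rank recorded in Example 3.4 and needs nothing beyond the construction of the symmetric square of a free module.

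The substance is the ``if'' direction: assuming $S^2(M)$ is free, I must deduce that $M$ is free. Since $R$ is an affine domain, $M$ is a finitely generated module over a Noetherian domain and so has a well-defined generic rank $r=\dim_F(M\otimes_R F)$, where $F$ is the fraction field. My plan is to localize at an arbitrary maximal ideal $\mathfrak m$ and apply the numerical freeness criterion over the local domain $R_{\mathfrak m}$ with residue field $K$. Writing $n=\dim_K(M_{\mathfrak m}\otimes_{R_{\mathfrak m}}K)$ for the minimal number of generators and $r$ for the rank, one always has $r\le n$, with equality if and only if $M_{\mathfrak m}$ is free. Because the symmetric square commutes with base change, $S^2(M)_{\mathfrak m}\otimes K\cong S^2_K(K^{\,n})$ and $S^2(M)_{\mathfrak m}\otimes F\cong S^2_F(F^{\,r})$, so the minimal number of generators and the rank of $S^2(M)_{\mathfrak m}$ are $\binom{n+1}{2}$ and $\binom{r+1}{2}$ respectively. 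As $S^2(M)_{\mathfrak m}$ is free these two numbers coincide, and since $x\mapsto\binom{x+1}{2}$ is strictly increasing on the nonnegative integers this forces $n=r$; hence $M_{\mathfrak m}$ is free. Performing this at every maximal ideal shows that $M$ is locally free, i.e.\ projective of constant rank $r$.

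The main obstacle is the final upgrade from projective to genuinely free: local freeness at every prime only delivers projectivity, and a finitely generated projective module over an affine domain need not be free in general. To close this gap I would bring in the regularity hypotheses that govern this theory. By Theorem 2.5 the projectivity just obtained is tied to regularity of $R$, and over the regular affine domains for which Example 3.4 is the model case (the polynomial algebras, where $\mathrm{Pic}(R)$ is trivial) a projective module of constant rank is free, giving that $M=\Omega^{(q)}(R/k)$ is free. I expect the bookkeeping of the base-change isomorphisms for $S^2$ and of the inequality $r\le n$ to be entirely routine, and the one genuinely delicate point to be this passage from projectivity to freeness, which is precisely where the domain and regularity assumptions must be used.
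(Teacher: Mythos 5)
Your forward direction and your localized counting argument are both correct, and they are in fact a more careful rendering of exactly what the paper does: the paper's proof declares ``without loss of generality we may assume that $R$ is local,'' then compares the minimal number of generators of $\Omega^{(q)}(R/k)$ (read off from the dimension $\binom{t+1}{2}$ of $S^{2}\bigl(\Omega^{(q)}(R/k)/m\Omega^{(q)}(R/k)\bigr)$ over $R/m$) with the generic rank $t$, concluding freeness from their equality. Your localization at each maximal ideal together with the strict monotonicity of $x\mapsto\binom{x+1}{2}$ is the same computation, carried out honestly at every point, and it legitimately yields that $\Omega^{(q)}(R/k)$ is projective of constant rank.

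The gap is your final step, and your proposed fix does not close it. From local freeness at every maximal ideal you get projectivity, and you correctly identify that projective does not imply free over an affine domain; but your remedy --- invoking Theorem 2.5 and restricting to ``the regular affine domains for which Example 3.4 is the model case (the polynomial algebras, where $\mathrm{Pic}(R)$ is trivial)'' --- silently changes the hypotheses: the lemma asserts the equivalence for an arbitrary affine domain of dimension $s$, not only for rings with trivial Picard group. Regularity does not rescue this: a Dedekind domain (regular, affine, dimension $1$) with nontrivial class group has finitely generated projectives of constant rank that are not free, and Theorem 2.5 concerns when $\Omega^{(1)}(R/k)$ is projective, not when projective modules are free. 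So as written your argument proves the lemma only for rings over which finitely generated projectives of constant rank are automatically free. It is worth noting that the paper's own proof has the identical unaddressed issue, hidden in its opening reduction: freeness is not a local property, so ``WLOG $R$ is local'' likewise only delivers projectivity of $\Omega^{(q)}(R/k)$ for a general affine domain. You have not taken a wrong route the paper avoided; you have found, and honestly flagged, the hole that the paper's proof glosses over --- but flagging it is not the same as closing it, so the proof as proposed is incomplete.
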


\begin{proof}: Without loss of generality we may assume that $R$ is local domain of dimension s.
Suppose that $\Omega^{(q)}(R/k)$ is free $R$-module. From the property of symmetric algebra
$S^{2}(\Omega^{(q)}(R/k))$ is a free $R$-module.

Conversly, suppose that $S^{2}(\Omega^{(q)}(R/k))$ is a free
$R$-module. If $dimR=s$ then the rank of $\Omega^{(q)}(R/k)$ is
$(\begin{array}{c}
  q+s \\
  s
\end{array})-1$. Let $(\begin{array}{c}
  q+s \\
  s
\end{array})-1=t$. Then the rank of $S^{2}(\Omega^{(q)}(R/k))$ is $(\begin{array}{c}
  t+1 \\
  t-1
\end{array})$.

Let $m$ be the maximal ideal of $R$. Then
$S^{2}(\Omega^{(q)}(R/k))\otimes_{R}R/m$ is an $R/m$ vector space
of dimension $(\begin{array}{c}
  t+1 \\
  t-1
\end{array})$.
$S^{2}(\Omega^{(q)}(R/k))\otimes_{R}R/m $ is isomorphic to
$S^{2}$(${\Omega^{(q)}(R/k)}\over{m\Omega^{(q)}(R/k)}$). Then
$S^{2}$(${\Omega^{(q)}(R/k)}\over{m\Omega^{(q)}(R/k)}$)is an $R/m$
vector space of dimension $(\begin{array}{c}
  t+1 \\
  t-1
\end{array})$ if and only if ${\Omega^{(q)}(R/k)}\over{m\Omega^{(q)}(R/k)}$ is an $R/m$
vector space of dimension $t$. Hence
${\Omega^{(q)}(R/k)}\over{m\Omega^{(q)}(R/k)}$ is an $R/m$ vector
space of dimension $t$ if and only if the number of minimal
generators of $\Omega^{(q)}(R/k)$ is $t$. The rank of
$\Omega^{(q)}(R/k)$ was $t$. Therefore it is obtained
$\Omega^{(q)}(R/k)$ is a free $R$-module as required.
\end{proof}

\begin{thm}: Let $R$ be an affine $k$-algebra and
$S(\Omega^{(1)}(R/k))$ has at least one symmetric derivation.
$\Omega^{(1)}(R/k)$ is a projective $R$-module if and only if
$\Omega^{(2)}(R/k)$ is a projective $R$-module.
\end{thm}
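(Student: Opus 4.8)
The plan is to reduce the equivalence to the splitting of the fundamental short exact sequence and then read off projectivity of one term from the other. First I would assemble, from Theorem 2.2 and Proposition 2.3, the short exact sequence of $R$-modules
$$0\rightarrow S^{2}(\Omega^{(1)}(R/k))\overset{\iota}{\rightarrow}\Omega^{(2)}(R/k)\overset{\theta}{\rightarrow}\Omega^{(1)}(R/k)\rightarrow 0,$$
where $\theta(\delta^{(2)}(f))=\delta^{(1)}(f)$ and $\iota$ identifies $S^{2}(\Omega^{(1)}(R/k))$ with $\mathrm{Ker}\,\theta$ via $\delta^{(1)}(a)\delta^{(1)}(b)\mapsto \delta^{(2)}(ab)-a\delta^{(2)}(b)-b\delta^{(2)}(a)$. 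Once this sequence is shown to split, $\Omega^{(2)}(R/k)\cong \Omega^{(1)}(R/k)\oplus S^{2}(\Omega^{(1)}(R/k))$, and then each direction follows from the fact that a direct summand of a projective module is projective. I emphasise that the hypothesis of a symmetric derivation is genuinely needed for the hard direction, since Theorems 2.4 and 2.5 give no way to pass from projectivity of $\Omega^{(2)}(R/k)$ back to regularity of $R$ on their own.

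The heart of the argument, and the step where the existence of a symmetric derivation $D$ on $S(\Omega^{(1)}(R/k))$ is used, is the construction of a splitting. I would use $D$ to manufacture a first-order $k$-derivation $d:R\to\Omega^{(2)}(R/k)$ lifting $\delta^{(1)}$; by the universal property of $\Omega^{(1)}(R/k)$ such a $d$ factors as $d=s\circ\delta^{(1)}$ for a unique $R$-linear $s$, and the identity $\theta\circ s=\mathrm{id}$ then exhibits $s$ as a section of $\theta$. The obstruction to $\delta^{(2)}$ itself being first order is precisely its symmetric defect $\delta^{(2)}(fg)-f\delta^{(2)}(g)-g\delta^{(2)}(f)=\iota(\delta^{(1)}(f)\delta^{(1)}(g))$, computed in Proposition 2.3. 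I propose to cancel this defect by setting $d=\delta^{(2)}-\tfrac12\,\iota\circ D^{2}$, where $D^{2}=D\circ\delta^{(1)}:R\to S^{2}(\Omega^{(1)}(R/k))$ makes sense and is $k$-linear because $D|_{R}=\delta^{(1)}$.

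The key computation is to show that $D^{2}$ carries exactly the right defect. Applying the Leibniz rule for the first-order derivation $D$ twice and using commutativity in the symmetric algebra, one finds $D^{2}(fg)-f\,D^{2}(g)-g\,D^{2}(f)=2\,\delta^{(1)}(f)\delta^{(1)}(g)$; since $\mathrm{char}\,k=0$ the factor $\tfrac12$ is legitimate, so the defect of $\tfrac12 D^{2}$ matches that of $\delta^{(2)}$ and the two cancel. Hence $d$ is a genuine first-order $k$-derivation, with $d(1)=0$ and $\theta\circ d=\delta^{(1)}$, which yields the section $s$ and the splitting. I expect this defect computation — the symmetric-algebra bookkeeping and the resulting factor of $2$ — to be the main obstacle; everything after the splitting is formal. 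With the splitting in hand, if $\Omega^{(2)}(R/k)$ is projective then its direct summand $\Omega^{(1)}(R/k)$ is projective; conversely, if $\Omega^{(1)}(R/k)$ is projective then $R$ is regular by Theorem 2.5, whence $\Omega^{(2)}(R/k)$ is projective by Theorem 2.4, completing the equivalence.
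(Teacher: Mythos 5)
Your proposal is correct, and it runs on the same engine as the paper's proof: the composite $D\circ\delta^{(1)}$ and the Leibniz computation $D^{2}(fg)-f\,D^{2}(g)-g\,D^{2}(f)=2\,\delta^{(1)}(f)\delta^{(1)}(g)$, with characteristic zero supplying the factor $\tfrac12$; your easy direction (projectivity of $\Omega^{(1)}(R/k)$ gives regularity by Theorem 2.5, hence projectivity of $\Omega^{(2)}(R/k)$ by Theorem 2.4) is identical to the paper's. Where you genuinely differ is in how the splitting is packaged. The paper views $D_{1}\delta^{(1)}$ (with $D_{1}$ the restriction of $D$ to $\Omega^{(1)}(R/k)$) as a second-order derivation and invokes the universal property of $\Omega^{(2)}(R/k)$ to get $t:\Omega^{(2)}(R/k)\to S^{2}(\Omega^{(1)}(R/k))$ with $t\delta^{(2)}=D_{1}\delta^{(1)}$; its displayed computation shows that $t$ composed with the inclusion of $\mathrm{Ker}\,\theta\simeq S^{2}(\Omega^{(1)}(R/k))$ is multiplication by $2$ on the generators $\delta^{(1)}(a)\delta^{(1)}(b)$, so $(1/2)t$ is a \emph{retraction} of the inclusion. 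You instead subtract the defect, observing that $d=\delta^{(2)}-\tfrac12\,\iota\circ D^{2}$ is an honest first-order $k$-derivation, and invoke the universal property of $\Omega^{(1)}(R/k)$ to get a \emph{section} $s$ of $\theta$. These are dual ways of splitting the same short exact sequence, resting on the same identity; your version has the mild advantage that $\theta\circ s=\mathrm{id}$ is immediate on the generators $\delta^{(1)}(f)$ (since $\theta\circ\iota=0$), whereas the paper must check that its map acts as multiplication by $2$ on all of $\mathrm{Ker}\,\theta$ from its values on generators. Either way one obtains $\Omega^{(2)}(R/k)\cong\Omega^{(1)}(R/k)\oplus S^{2}(\Omega^{(1)}(R/k))$, and projectivity passes to the direct summand, completing the equivalence.
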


\begin{proof}: Suppose that $\Omega^{(1)}(R/k)$ is a projective
$R$-module. By Theorem 2.5 $R$ is a regular ring and $\Omega^{(2)}(R/k)$
is a projective $R$-module by Theorem 2.4.

Conversly, From by Proposition 2.3. and Theorem 2.2 we have the
exact sequence
$$0\rightarrow
S^{2}(\Omega^{(1)}(R/k)) \rightarrow \Omega^{(2)}(R/k)
{\rightarrow} \Omega^{(1)}(R/k)\rightarrow 0$$
of $R$-modules. This sequence is split exact sequence. The splitting of the sequence is sufficient to prove.
It follows from the definition of the symmetric derivation any symmetric derivation $D$ is uniquely determined by its
restriction $D_{1}:\Omega^{(1)}(R/k)\rightarrow
S^{2}(\Omega^{(1)}(R/k))$, and it is observed that the composition
$D_{1} \delta^{(1)}$  $$ R\rightarrow \Omega^{(1)}(R/k)\rightarrow
S^{2}(\Omega^{(1)}(R/k))   $$ is a second order derivation of $R$.
By the universal mapping property of $\{\Omega^{(2)}(R/k),\delta^{(2)} \}$ there is unique $R$ module
homomorphism $t: \Omega^{(2)}(R/k)\rightarrow
S^{2}(\Omega^{(1)}(R/k))$ such that
$t\delta^{(2)}=D_{1}\delta^{(1)}$
$$t(\delta^{(2)}(ab)-a\delta^{(2)}(b)-b\delta^{(2)}(a))=t\delta^{(2)}(ab)-at\delta^{(2)}(b)-bt\delta^{(2)}(a)  $$
$$=D_{1}\delta^{(1)}(ab)-aD_{1}\delta^{(1)}(b)-bD_{1}\delta^{(1)}(a)$$
$$=D_{1}(a\delta^{(1)}(b)+b\delta^{(1)}(a))-aD_{1}\delta^{(1)}(b)-bD_{1}\delta^{(1)}(a)$$
$$=aD_{1}\delta^{(1)}(b)+ \delta^{1}(a)\delta^{(1)}(b)+bD_{1}\delta^{(1)}(a))+\delta^{1}(b)\delta^{(1)}(a)-aD_{1}\delta^{(1)}(b)-bD_{1}\delta^{(1)}(a)$$
$$=2\delta^{(1)}(a).\delta^{(1)}(b)$$
It follows that $(1/2)t$ splits as required.

Then $\Omega^{(2)}(R/k)$ is isomorphic to $\Omega^{(1)}(R/k)\bigoplus S^{2}(\Omega^{(1)}(R/k))$. Therefore
$\Omega^{(1)}(R/k)$ is a projective $R$-module.
\end{proof}

For an affine local $k$ algebra we give the following result.
\begin{cor}: Let $R$ be an affine local $k$-algebra and
$S(\Omega^{(1)}(R/k))$ has at least one symmetric
derivation.$\Omega^{(1)}(R/k)$ is a free $R$ module if and only if
$\Omega^{(2)}(R/k)$ is a free $R$ module.
\end{cor}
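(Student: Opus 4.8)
The plan is to deduce this free-versus-free equivalence from the projective-versus-projective equivalence already established in Theorem 3.9, using the classical fact that over a Noetherian local ring every finitely generated projective module is free. Since the hypotheses of Theorem 3.9 (affineness together with the existence of a symmetric derivation on $S(\Omega^{(1)}(R/k))$) are exactly what is assumed here, and since $R$ is moreover local, for each of $\Omega^{(1)}(R/k)$ and $\Omega^{(2)}(R/k)$ the notions ``free'' and ``projective'' will coincide, and the corollary becomes a formal consequence.

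First I would record the equivalence, valid for any finitely generated module $M$ over the local ring $R$: $M$ is free $\iff$ $M$ is projective. The nontrivial direction is the standard Nakayama argument: lift a $k$-basis of $M/\mathfrak{m}M$ to a minimal generating set, obtain a surjection from a free module of the corresponding rank, and use projectivity to split it, the kernel then vanishing by Nakayama. As recalled in the introduction, an affine $R$ makes $\Omega^{(1)}(R/k)$ and $\Omega^{(2)}(R/k)$ finitely generated, so this applies to $M=\Omega^{(1)}(R/k)$ and to $M=\Omega^{(2)}(R/k)$.

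Then I would invoke Theorem 3.9 to conclude that $\Omega^{(1)}(R/k)$ is projective if and only if $\Omega^{(2)}(R/k)$ is projective. Chaining the three equivalences yields $\Omega^{(1)}(R/k)$ free $\iff$ $\Omega^{(1)}(R/k)$ projective $\iff$ $\Omega^{(2)}(R/k)$ projective $\iff$ $\Omega^{(2)}(R/k)$ free, which is precisely the assertion.

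A more hands-on alternative reuses the split short exact sequence obtained in the proof of Theorem 3.9, which gives $\Omega^{(2)}(R/k)\cong \Omega^{(1)}(R/k)\oplus S^2(\Omega^{(1)}(R/k))$. If $\Omega^{(1)}(R/k)$ is free then its symmetric square $S^2(\Omega^{(1)}(R/k))$ is automatically free (the symmetric powers of a free module are free), so the direct sum is free; conversely if $\Omega^{(2)}(R/k)$ is free then $\Omega^{(1)}(R/k)$, being a direct summand of a free module, is projective and hence free over the local ring. Either way the one genuine obstacle is the projective$\,\Rightarrow\,$free step. Note that although Lemma 3.8 supplies the equivalence ``$\Omega^{(q)}(R/k)$ free $\iff$ $S^2(\Omega^{(q)}(R/k))$ free,'' it is stated only for domains, whereas here $R$ is merely local; I would therefore avoid relying on it and instead invoke the local projective$\,\Rightarrow\,$free principle directly, which needs no integrality hypothesis.
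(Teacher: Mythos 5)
Your proposal is correct and matches the paper's intent: the paper states this corollary immediately after Theorem 3.9 with no separate argument, relying exactly on the combination you spell out — Theorem 3.9's projective-iff-projective equivalence together with the standard fact that finitely generated projective modules over a Noetherian local ring are free. Your filling-in of the Nakayama details and your remark about avoiding Lemma 3.8 (stated only for domains) are sound and add rigor the paper leaves implicit.
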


\begin{thm}: Let $R$ be an affine $k$-algebra and
$S(\Omega^{(1)}(R/k))$ has at least one symmetric derivation. $R$ is a
regular ring  if and only if $\Omega^{(2)}(R/k)$ is a projective
$R$ module.
\end{thm}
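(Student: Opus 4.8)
The plan is to prove the biconditional by chaining together three results already established in the excerpt: Sweedler's Theorem~2.4, the McConnell--Rabson criterion (Theorem~2.5), and the immediately preceding Theorem~3.9. Since the statement is an equivalence, I would treat the two implications separately, and in each case the symmetric-derivation hypothesis enters only through its role in Theorem~3.9.

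For the forward implication I would assume that $R$ is a regular ring and apply Theorem~2.4 directly with $q=2$: regularity of the affine $k$-algebra $R$ forces $\Omega^{(q)}(R/k)$ to be projective for every $q$, so in particular $\Omega^{(2)}(R/k)$ is a projective $R$-module. Note that this direction does not even invoke the symmetric-derivation hypothesis.

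For the converse I would assume that $\Omega^{(2)}(R/k)$ is projective and argue backwards. By hypothesis $S(\Omega^{(1)}(R/k))$ carries at least one symmetric derivation, so Theorem~3.9 applies and asserts that $\Omega^{(1)}(R/k)$ is projective if and only if $\Omega^{(2)}(R/k)$ is projective; invoking the nontrivial half of that equivalence yields the projectivity of $\Omega^{(1)}(R/k)$. I would then close the argument with Theorem~2.5, which states that $\Omega^{(1)}(R/k)$ is projective precisely when $R$ is regular, and conclude that $R$ is a regular ring.

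The genuine content of the argument is imported rather than generated here: the delicate step is the splitting of the short exact sequence $0 \to S^{2}(\Omega^{(1)}(R/k)) \to \Omega^{(2)}(R/k) \to \Omega^{(1)}(R/k) \to 0$ produced by the symmetric derivation, which is exactly what Theorem~3.9 records via the splitting map $(1/2)t$. Consequently the only point to verify in writing up the present theorem is that the hypotheses line up cleanly, namely that the assumption ``$S(\Omega^{(1)}(R/k))$ has at least one symmetric derivation'' is precisely what is needed to invoke Theorem~3.9, while affineness of $R$ is what legitimizes Theorems~2.4 and~2.5. I do not expect any serious obstacle beyond this bookkeeping, since no new estimates or constructions are required.
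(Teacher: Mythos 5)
Your proposal is correct and follows exactly the paper's own route: the paper's proof is simply the citation ``From Theorem 2.4, Theorem 2.5 and Theorem 3.9,'' and your write-up spells out precisely that chain (Theorem 2.4 for the forward direction, Theorem 3.9 followed by Theorem 2.5 for the converse). You have merely made explicit the bookkeeping the paper leaves implicit, including the correct observation that the symmetric-derivation hypothesis is needed only for the converse.
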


\begin{proof}: From by Theorem 2.4.,  Theorem 2.5. and Theorem 3.9.
\end{proof}

\begin{cor}: Let $R$ be an affine $k$-algebra and
$S(\Omega^{(1)}(R/k))$ has at least one symmetric derivation. $R$ is a
regular local ring  if and only if $\Omega^{(2)}(R/k)$ is a free
$R$ module.
\end{cor}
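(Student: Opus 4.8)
The plan is to deduce this corollary from Theorem 3.11 together with the standard fact that over a Noetherian local ring every finitely generated projective module is free. Since $R$ is an affine $k$-algebra it is Noetherian, and by the remarks in the introduction $\Omega^{(2)}(R/k)$ is a finitely generated $R$-module; these two observations are what let me pass freely between ``projective'' and ``free'' once $R$ is assumed local.

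For the forward direction, I would assume $R$ is a regular local ring. Regularity gives, via Theorem 3.11 (equivalently, Theorem 2.5 followed by Theorem 2.4), that $\Omega^{(2)}(R/k)$ is a projective $R$-module. Because $R$ is local and $\Omega^{(2)}(R/k)$ is finitely generated, projectivity upgrades to freeness, so $\Omega^{(2)}(R/k)$ is a free $R$-module.

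For the converse, I would assume $\Omega^{(2)}(R/k)$ is free, hence in particular projective. Theorem 3.11 then yields that $R$ is a regular ring, and since $R$ is local by hypothesis, $R$ is a regular local ring. Combining the two directions finishes the argument.

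The only content beyond quoting Theorem 3.11 is the local-to-free step, so the main obstacle is really bookkeeping: one must keep the hypothesis that $S(\Omega^{(1)}(R/k))$ carries a symmetric derivation in force (it is what makes Theorem 3.9, and hence Theorem 3.11, applicable) and must have finite generation of $\Omega^{(2)}(R/k)$ available in order to invoke the projective-equals-free principle over the local ring. Alternatively, I could route the argument through Corollary 3.10: over the local ring $R$, regularity is equivalent to freeness of $\Omega^{(1)}(R/k)$ by Theorem 2.5 (using projective $=$ free locally), and Corollary 3.10 transfers this to freeness of $\Omega^{(2)}(R/k)$, giving the same conclusion.
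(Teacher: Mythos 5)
Your proof is correct and takes exactly the route the paper intends: the paper states this corollary without any proof, as an immediate consequence of Theorem 3.11, and your argument (Theorem 3.11 plus the standard fact that finitely generated projective modules over a Noetherian local ring are free) is precisely the reasoning that justifies it. The only point to note is that you correctly read localness of $R$ as a standing hypothesis --- as in Corollary 3.10 --- since the paper's statement omits the word ``local'' from the hypothesis even though the converse direction is false without it (e.g.\ $R=k[x]$ is regular with $\Omega^{(2)}(R/k)$ free, but not local).
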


Now, it is obtained the following important results related to projective dimensions of K\"{a}hler modules by using the split exact sequence
$$0\rightarrow
S^{2}(\Omega^{(1)}(R/k)) \rightarrow \Omega^{(2)}(R/k)
{\rightarrow} \Omega^{(1)}(R/k)\rightarrow 0$$
of $R$-modules in the proof of Theorem 3.9. and homological properties.

\begin{cor}: Let $R$ be an affine $k$-algebra and
$S(\Omega^{(1)}(R/k))$ has at least one symmetric derivation. If the projective
dimension of  $\Omega^{(2)}(R/k)$ is finite then the projective
dimension of  $\Omega^{(1)}(R/k)$ is finite.
\end{cor}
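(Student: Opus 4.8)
The plan is to reduce the claim to a formal consequence of the split exact sequence constructed in the proof of Theorem 3.9. Because $S(\Omega^{(1)}(R/k))$ is assumed to carry at least one symmetric derivation, that proof furnishes not merely the short exact sequence
$$0\rightarrow S^{2}(\Omega^{(1)}(R/k)) \rightarrow \Omega^{(2)}(R/k) \rightarrow \Omega^{(1)}(R/k)\rightarrow 0,$$
but an explicit splitting of it (via the map $(1/2)t$), and hence the direct sum decomposition $\Omega^{(2)}(R/k)\cong \Omega^{(1)}(R/k)\oplus S^{2}(\Omega^{(1)}(R/k))$. First I would record this decomposition directly from Theorem 3.9; no new construction is needed.

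The second step is purely homological. I would invoke the standard fact that the projective dimension of a finite direct sum is the maximum of the projective dimensions of its summands, equivalently that a direct summand never has larger projective dimension than the ambient module. Concretely, since $\mathrm{Ext}^{i}_{R}(\Omega^{(1)}(R/k),-)$ is a direct summand of $\mathrm{Ext}^{i}_{R}(\Omega^{(2)}(R/k),-)$, vanishing of the latter for all $i$ beyond some bound forces vanishing of the former. Applying this with $\Omega^{(1)}(R/k)$ a summand of $\Omega^{(2)}(R/k)$ yields $\mathrm{pd}_{R}\,\Omega^{(1)}(R/k)\le \mathrm{pd}_{R}\,\Omega^{(2)}(R/k)$, so finiteness of the right-hand side forces finiteness of the left, which is exactly the assertion.

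I do not anticipate a genuine obstacle: once the splitting of Theorem 3.9 is in hand, the argument is the elementary observation that projective dimension is monotone under passage to direct summands. The only point that deserves explicit mention is that the hypothesis ``$S(\Omega^{(1)}(R/k))$ has at least one symmetric derivation'' is precisely what guarantees the sequence splits. Without it one would only have the bare short exact sequence and would have to settle for the weaker estimate $\mathrm{pd}_{R}\,\Omega^{(1)}(R/k)\le \max\{\mathrm{pd}_{R}\,\Omega^{(2)}(R/k),\,\mathrm{pd}_{R}\,S^{2}(\Omega^{(1)}(R/k))+1\}$, which cannot deliver finiteness on its own; the splitting is what removes the dependence on $S^{2}(\Omega^{(1)}(R/k))$ and makes $\Omega^{(1)}(R/k)$ a genuine summand.
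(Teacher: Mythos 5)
Your proposal is correct and follows essentially the same route as the paper: the paper's proof also just invokes the split exact sequence from Theorem 3.9 to obtain $\Omega^{(2)}(R/k)\cong\Omega^{(1)}(R/k)\oplus S^{2}(\Omega^{(1)}(R/k))$ and then concludes. The only difference is that you spell out the homological step (monotonicity of projective dimension under direct summands, via additivity of $\mathrm{Ext}$) that the paper leaves implicit in ``Therefore it is obtained as required.''
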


\begin{proof}: $\Omega^{(2)}(R/k)$ is isomorphic to $\Omega^{(1)}(R/k)\bigoplus S^{2}(\Omega^{(1)}(R/k))$ from the split exact sequence $$0\rightarrow
S^{2}(\Omega^{(1)}(R/k)) \rightarrow \Omega^{(2)}(R/k)
{\rightarrow} \Omega^{(1)}(R/k)\rightarrow 0$$
of $R$-modules. Therefore it is obtained as required.
\end{proof}

\begin{cor}: Let $R$ be an affine $k$-algebra and
$S(\Omega^{(1)}(R/k))$ has at least one symmetric derivation. If the projective
dimension of  $\Omega^{(1)}(R/k)$ is infinite then the projective
dimension of  $\Omega^{(2)}(R/k)$ is infinite.
\end{cor}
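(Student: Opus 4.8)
The plan is to recognize this statement as the logical contrapositive of the preceding Corollary~3.13 and to derive it directly from the splitting of the fundamental short exact sequence obtained in the proof of Theorem~3.9. Since $S(\Omega^{(1)}(R/k))$ carries a symmetric derivation by hypothesis, that proof exhibits the sequence
$$0\rightarrow S^{2}(\Omega^{(1)}(R/k)) \rightarrow \Omega^{(2)}(R/k) \rightarrow \Omega^{(1)}(R/k)\rightarrow 0$$
as split, so that $\Omega^{(2)}(R/k)\cong \Omega^{(1)}(R/k)\bigoplus S^{2}(\Omega^{(1)}(R/k))$ as $R$-modules. The whole argument hinges on this isomorphism, which I may take as already established.

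First I would record the standard homological fact that for any two $R$-modules $A$ and $B$ one has $\operatorname{pd}_R(A\oplus B)=\max\{\operatorname{pd}_R A,\ \operatorname{pd}_R B\}$; equivalently, a direct summand of a module of finite projective dimension again has finite projective dimension. Applying this with $A=\Omega^{(1)}(R/k)$ and $B=S^{2}(\Omega^{(1)}(R/k))$ and invoking the splitting above gives
$$\operatorname{pd}_R \Omega^{(2)}(R/k)=\max\{\operatorname{pd}_R \Omega^{(1)}(R/k),\ \operatorname{pd}_R S^{2}(\Omega^{(1)}(R/k))\}.$$
In particular $\Omega^{(1)}(R/k)$, being a direct summand of $\Omega^{(2)}(R/k)$, satisfies $\operatorname{pd}_R \Omega^{(1)}(R/k)\le \operatorname{pd}_R \Omega^{(2)}(R/k)$.

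The conclusion is then immediate: if $\operatorname{pd}_R \Omega^{(1)}(R/k)=\infty$, the maximum on the right-hand side is infinite, which forces $\operatorname{pd}_R \Omega^{(2)}(R/k)=\infty$. This is exactly the contrapositive of Corollary~3.13, and so the two results are formally equivalent given the splitting.

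There is essentially no genuine obstacle here, since all the substantive work---the construction of the symmetric derivation and the resulting splitting of the sequence---was already carried out in Theorem~3.9. The only point that deserves attention is to confirm that the hypothesis on the existence of a symmetric derivation is truly what is being used: it is precisely this hypothesis that guarantees the short exact sequence splits, and hence that $\Omega^{(1)}(R/k)$ occurs as a direct summand of $\Omega^{(2)}(R/k)$ rather than merely as a quotient. Without the splitting one would retain only the surjection $\Omega^{(2)}(R/k)\twoheadrightarrow\Omega^{(1)}(R/k)$, which does not by itself propagate infiniteness of projective dimension in the direction required.
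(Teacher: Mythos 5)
Your proof is correct and matches the paper's approach: the paper states this corollary without a separate proof precisely because it is the contrapositive of Corollary 3.13, whose proof rests on the same splitting $\Omega^{(2)}(R/k)\cong \Omega^{(1)}(R/k)\oplus S^{2}(\Omega^{(1)}(R/k))$ coming from Theorem 3.9. Your explicit use of $\operatorname{pd}_R(A\oplus B)=\max\{\operatorname{pd}_R A,\operatorname{pd}_R B\}$ and your remark that a mere surjection would not suffice simply make precise what the paper leaves implicit.
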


\begin{ex}Let $S$ be the coordinate ring of the cups $y^2=x^3$. Then $S= k[x,y]/(f)$ where $f=y^2-x^3$.
It can be found the projective dimension of $\Omega^{(1)}(S/k)$, $\Omega^{(2)}(S/k)$ and $J_{(1)}(\Omega^{(1)}(S/k))$

$\Omega^{(1)}(S/k)\simeq F/N$ where $F$ is a free $S$ module on
$\{\delta^{1}(x),\delta^{1}(y)\}$ and $N$ is a submodule of $F$
generated by $\delta^{1}(f)=2y\delta^{1}(y)-3x^{2}\delta^{1}(x)$. Certainly
$N$ is a free on $\delta^{1}(f)$.  Therefore we have
$$0\longrightarrow N \overset{\phi}{\longrightarrow} F
\overset{\pi}{\longrightarrow} \Omega^{(1)}(S/k)\simeq F/N\longrightarrow
0$$
a free resolution of $\Omega^{(1)}(S/k)$. In this sequence the homomorphism $\phi$ is a matrix $\left(%
\begin{array}{c}
  -3x^2 \\
  2y \\
\end{array}%
\right) $  and projective dimension of $\Omega^{(1)}(S/k)$ less than or equal to one.

By the same argument $\Omega^{(2)}(S/k)\simeq F'/N'$ where $F'$ is a free
$S$ module on
$$\{\delta^{2}(x),\delta^{2}(y),\delta^{2}(xy),\delta^{2}(x^2),\delta^{2}(y^2)
\}$$ and $N'$ is a submodule of $F'$ generated by
$\{\delta^{2}(f),\delta^{2}(xf),\delta^{2}(yf)\}$ where

$\delta^{2}(f)=\delta^{2}(y^2)-3x\delta^{2}(x^2)+3x^2\delta^{2}(x)$

$\delta^{2}(xf)=x\delta^{2}(y^2)-6x^2\delta^{2}(x^2)+2y\delta^{2}(xy)+7x^3\delta^{2}(x)-2xy\delta^{2}(y)$

$\delta^{2}(yf)=3y\delta^{2}(y^2)-3xy\delta^{2}(x^2)-3x^2\delta^{2}(xy)+6x^2y\delta^{2}(x)-y^2\delta^{2}(y)$\\

Since $rank \Omega^{(2)}(S/k)=2$ we have $rank N'=rank F'-rank \Omega^{(2)}(S/k)=5-2=3$ . So $N'$ is  free $S$-module. Therefore we have
$$0\longrightarrow N' \overset\phi{\longrightarrow} F'\overset\pi{\longrightarrow} \Omega^{(2)}(S/k)\simeq F'/N'\longrightarrow 0$$
a free resolution of $\Omega^{(2)}(S/k)$. Here $\pi$ is the natural
surjection and $\phi$ is given by the following matrix

$$\left(%
\begin{array}{cccccc}
  -3x & 1 & 0 & 3x^2 & 0  \\
  -6x^2 & x & 2y & 7x^3 & -2xy   \\
  -3xy & 3y & -3x^2 & 6x^2y & -y^2  \\
\end{array}%
\right)$$  and projective dimension of $\Omega^{(2)}(S/k)$ less than or equal to one.

Similarly $J_{1}(\Omega^{(1)}(S/k))\simeq F''/N''$ where $F''$ is a free
$S$-module with basis $\{ \Delta_{1}(\delta^{(1)}(x)), \Delta_{1}(\delta^{(1)}(y)), \Delta_{1}(x\delta^{(1)}(x)), \Delta_{1}(x\delta^{(1)}(y)), \Delta_{1}(y\delta^{(1)}(x))\}$
and $N''$ is a submodule of $F''$ generated by

$a=3x^2 \Delta_{1}(x\delta^{(1)}(x))-2y\Delta_{1}(x\delta^{(1)}(y))-3x^3\Delta_{1}(\delta^{(1)}(x))+2xy\Delta_{1}(\delta^{(1)}(y))$

$b=3x^2 \Delta_{1}(x\delta^{(1)}(x))-2y\Delta_{1}(y\delta^{(1)}(x))-x^3\Delta_{1}(\delta^{(1)}(x))$

$c=-6xy \Delta_{1}(x\delta^{(1)}(x))+3x^2\Delta_{1}(x\delta^{(1)}(y))+3x^2y\Delta_{1}(\delta^{(1)}(x))+x^3\Delta_{1}(\delta^{(1)}(y))$

Since $rank J_{1}(\Omega^{(1)}(S/k))=2$ we have $rank N''=rank F''-rank
J_{1}(\Omega^{(1)}(S/k))=5-2=3$. So $N''$ is a free $S$-module of rank 3.
Therefore
$$0\longrightarrow N'' \longrightarrow F''\overset\pi{\longrightarrow} J_{1}(\Omega^{(1)}(S/k))\simeq F''/N''\longrightarrow 0$$
a free resolution of $J_{1}(\Omega^{(1)}(S/k))$ and projective dimension of $J_{1}(\Omega^{(1)}(S/k))$ less than or equal to one..

$\Omega^{(1)}(S/k)\simeq F/N$ where $F$ is a free $S$ module on $\{\delta^{1}(x),\delta^{1}(y)\}$ and $N$ is a submodule of $F$
generated by $\delta^{1}(f)=2y\delta^{1}(y)-3x^{2}\delta^{1}(x)$. Using this modules, $S^{2}(\Omega^{(1)}(S/k))\simeq S^{2}(F)/l_{N}$ where $S^2(F)$ is a free
module with basis

$\{ \delta^{(1)}(x)\bigvee \delta^{(1)}(x), \delta^{(1)}(x)\bigvee \delta^{(1)}(y),\delta^{(1)}(y)\bigvee \delta^{(1)}(y)\}$
and $l_{N}$ is a submodule of $S^2(F)$ generated by

$\delta^{1}(f)\bigvee\delta^{1}(x) =(2y\delta^{1}(y)-3x^{2}\delta^{1}(x))\bigvee\delta^{1}(x)=2y\delta^{1}(y)\bigvee\delta^{1}(x)-3x^{2}\delta^{1}(x)\bigvee\delta^{1}(x)$

$\delta^{1}(f)\bigvee\delta^{1}(y) =(2y\delta^{1}(y)-3x^{2}\delta^{1}(x))\bigvee\delta^{1}(y)=2y\delta^{1}(y)\bigvee\delta^{1}(y)-3x^{2}\delta^{1}(x)\bigvee\delta^{1}(y)$

Since $rank S^{2}(\Omega^{(1)}(S/k))=1$ we have $rank l_{N}=rank S^{2}(F)-rank
S^{2}(\Omega^{(1)}(S/k))=3-1=2$. So $l_{N}$ is a free $S$-module of rank 2.
Therefore
$$0\longrightarrow l_{N} \longrightarrow S^{2}(F)\overset\pi{\longrightarrow} S^{2}(\Omega^{(1)}(S/k))\simeq S^{2}(F)/l_{N}\longrightarrow 0$$
a free resolution of $S^{2}(\Omega^{(1)}(S/k))$ and projective dimension of $S^{2}(\Omega^{(1)}(S/k))$ less than or equal to one..
\end{ex}
\begin{ex}Let $R=k[x,y,z]$ with $y^2=xz$ and $z^2=x^3$. It can be found the projective dimension of $\Omega^{(1)}(S/k)=1$ and the projective dimension of $\Omega^{(2)}(S/k)$ is infinite.
\end{ex}
\section{Conclusions}
In this work we have introduced the concept of higher symmetric derivation on K\"{a}hler modules and studied some of its properties. An application of this theory is given in solving a projective dimension problem. Consequently, it is not defined a symmetric derivation for an arbitrary ring (see Corollary 3.13 and example 3.16). Now we produce the following two problems for further works,

First problem: Can we define a symmetric derivation on which rings?

Second problem: Can we find a relationship between the projective dimensions of $\Omega^{(1)}(R/k)$ and $\Omega^{(n)}(R/k)$?

        \end{document}